\providecommand{\U}[1]{\protect\rule{.1in}{.1in}}
\newtheorem{theorem}{Theorem}
\newtheorem{lemma}[theorem]{Lemma}
\newtheorem{proposition}[theorem]{Proposition}
\newtheorem{remark}[theorem]{Remark}
\newcommand{\N}{\mathbb{N}}
\newcommand{\R}{\mathbb{R}}
\newcommand{\Co}{\mathbb{C}}
\newcommand{\vertiii}[1]{{\left\vert\kern-0.25ex\left\vert\kern-0.25ex\left\vert#1
\right\vert\kern-0.25ex\right\vert\kern-0.25ex\right\vert}}
\numberwithin{equation}{section}
\numberwithin{theorem}{section}
\numberwithin{figure}{section}
\newcommand{\beq}{\begin{equation}}
\newcommand{\eeq}{\end{equation}}
\begin{document}

\title{Efficient computation of highly oscillatory integrals  by using
QTT tensor approximation}
\author{Boris Khoromskij\thanks{Max-Planck-Institut f\"ur Mathematik in den
Naturwissenschaften, Inselstr. 22-26, 04103 Leipzig, Germany, e-mail:
\texttt{bokh@mis.mpg.de}}\hspace{2cm}
Alexander Veit\thanks{Department of Computer Science, University of Chicago, 1100 East 58th Street, Chicago, IL 60637, e-mail:
\texttt{aveit@uchicago.edu}} }
\date{}
\maketitle

\begin{abstract}
We propose a new method for the efficient approximation of a class of 
highly oscillatory weighted integrals where the oscillatory function 
depends on the frequency parameter $\omega \geq 0$, typically varying in a large interval.
Our approach is based, for fixed but arbitrary oscillator, 
on the pre-computation and low-parametric approximation of certain $\omega$-dependent 
prototype functions whose evaluation leads in a straightforward way to recover
the target integral. 
The difficulty that arises is that these prototype 
functions consist of oscillatory integrals which makes them difficult to evaluate. Furthermore they have to be approximated typically in large intervals. 
Here we use the quantized-tensor train (QTT) approximation method for functional $m$-vectors
of logarithmic complexity in $m$
in combination with a cross-approximation scheme for TT tensors.
This allows the accurate approximation and efficient storage of these functions in 
the wide range of grid and frequency parameters. 
Numerical examples illustrate the efficiency of the QTT-based numerical integration
scheme on various examples in one and several spatial dimensions.

\medskip

\textbf{AMS subject classifications: } 65F30, 65F50, 65N35, 65D30
\medskip

\textbf{Keywords: } highly oscillatory integrals, quadrature, tensor representation, 
QTT tensor approximation.

\end{abstract}

\section{Introduction and Problem Setting}

In this paper we are interested in the efficient approximation of (highly) oscillatory integrals. 
In the most general setting these integrals are of the form
\beq
\label{genInt}
\int_\Omega f(x) h_\omega (x) dx,
\eeq
where $\Omega\subset\mathbb{R}^d$, $d\in\N$, is a general open domain, 
$h_\omega$ is an oscillatory function where the 
parameter $\omega\geq 0$ determines the rate of oscillation and $f$ is a 
non-oscillatory (typically analytic) function. 
An important special case occurs if the oscillatory function is the imaginary exponential 
function with oscillator $g$, i.e.,
\beq
\label{imagexp}
h_\omega (x) = \operatorname{e}^{\operatorname{i}\omega g(x)}.
\eeq
This type of oscillatory integrals  has been in the main focus of research in recent 
years since they play an important role in 
a wide range of applications. Prominent examples include the solution of highly 
oscillatory differential equations via the modified 
Magnus expansion (see \cite{Iserles_Magnus,HocL03}), boundary integral formulations 
of the Helmholtz equation \cite{KhSaVe}, the evaluation of special 
functions and orthogonal expansions (e.g. Fourier series, modified Fourier series) 
(see \cite{IsNo}), lattice summation techniques and ODEs/PDEs with oscillating and quasi-periodic coefficients 
\cite{VeBoKh:Ewald:14,vekh-lattice2-2014,khor-survey-2014}. 

Other types of oscillatory functions that can be found in the literature include 
the Bessel oscillator 
$h_\omega (x) = J_\nu(\omega x)$ (see \cite{Xiang})  and functions of the form 
$h_\omega (x) = v(\sin(\omega \theta(x)) )$ (see \cite{IserlesLevin}),
as well as some examples considered in \cite{Trefethen}.

For $d=1$ an obvious way to obtain an approximation of \eqref{genInt} is Gaussian quadrature. 
For large $\omega$ however 
such standard approaches become ineffective since the number of quadrature points has 
to be chosen proportional to $\omega$ 
in order to resolve the oscillations of the integrand. Therefore several alternative 
approaches have been developed to  
overcome this difficulty. The most successful methods include the asymptotic expansion, 
Filon-type methods, Levin-type methods 
and numerical steepest descent (see e.g. \cite{Olver,Iserles2005,Iserles2006, Olver2, Huybrechs, Olver2006}) and 
recently introduced Gaussian quadrature rules with complex weight functions 
(see \cite{complexGauss1, complexGauss2}). 
Although these methods are mathematically elegant they can typically not be applied in 
a ``black-box'' fashion since either 
derivatives of $f$ and $g$ are involved, moments 
$\int_a^b x^k \operatorname{e}^{\operatorname{i}\omega g(x)}$ must be known or 
computations in the complex plane have to be performed. Furthermore these methods get more 
complicated (or even non-applicable) 
if the oscillator $g$ has stationary points (i.e. points where $g^\prime (x)$ vanishes), 
multidimensional integrals are considered 
or $f$ and $g$ are not analytic. For general oscillators $h_\omega(x)$ it is typically 
not known how these methods can be applied (see however \cite{Xiang, IserlesLevin}).

In this paper we propose a Filon-like method which is based, for fixed but arbitrary 
oscillators $g$ or $h_\omega$ 
respectively, on the pre-computation and approximation of certain $\omega$-dependent 
prototype functions whose evaluation 
leads in a straightforward way to approximations of \eqref{genInt}. The difficulty 
that arises is that these prototype 
functions consist of oscillatory integrals which makes 
them difficult to evaluate. Furthermore they have to be approximated typically in large intervals. 
Here we use the quantized-tensor train (QTT) approximation method for functional $M$-vectors
\cite{khor-qtt-2009pre,khor-qtt-2011} of logarithmic complexity in $M$
in combination with a cross-approximation scheme for TT tensors introduced in \cite{Osel10}. 
This allows the accurate approximation and efficient storage of these functions in 
the wide range of grid and frequency parameters. 
Literature surveys on tensor methods can be found in 
\cite{KoldaB:07,khor-survey-2011,hackbusch-2012,larskres-survey-2013,khor-survey-2014,dc-phd}.

The QTT approximation applies to the quantized image of the target discrete function,
obtained by its isometric folding transformation to
the higher dimensional {\it quantized tensor space}. 
For example, a vector of size $M=2^L$  can be successively 
reshaped by a diadic folding to an $L$-fold tensor in $\bigotimes_{j=1}^L \mathbb{R}^{2}$
of the irreducible mode size $m=2$ (quantum of information), then
the  low-rank approximation in the canonical or TT  format can be applied consequently.
The rigorous justification of the QTT approximation method for rather general classes of functional vectors
was first presented in \cite{khor-qtt-2009pre}. For our particular application in this paper the most
important result is the existance of rank-$1$ QTT representation of the complex 
exponential $M$-vector $\{e^{i \omega n}\}_{n=0}^{M-1}$.
The QTT-type representation for $2^L \times 2^L$ matrices was introduced in
\cite{osel-2d2d-2010}, see survey papers \cite{khor-survey-2011,khor-survey-2014} for 
further references on the topic.

The quadrature scheme proposed in this article does not require analytic knowledge 
about $f,g$ or $h_\omega$ respectively and can therefore be 
applied in a black-box fashion. One condition for its efficiency  is that $f$ can be well 
approximated by polynomials 
which is typically the case for analytic, non-oscillatory functions. The quadrature error of the
 method can be easily 
estimated and controlled in terms of $f$. Furthermore the method is uniformly accurate for all 
considered $\omega$.\vspace{\baselineskip}

Since it is the most important case in practice we carry out the description and the analysis of our 
scheme only for the case \eqref{imagexp}. 
We emphasize, however, that the method can be easily applied also to the more general situation 
\eqref{genInt} (see Section \ref{Sec:NumExp}). We introduce the notation
\beq
\label{specInt}
I(\omega,f):=\int_\Omega f(x) \operatorname{e}^{\operatorname{i}\omega g(x)} dx.
\eeq
We consider the oscillator $g$ and the domain $\Omega$ to be arbitrary but fixed and are interested 
in the efficient computation 
of \eqref{specInt} for different real-valued functions $f$ and different values of $\omega$. 
Without loss of generality we assume from now on that $|g(x)|\leq 1$ for all $x\in\Omega$. 
A separation of the real- and imaginary part of $I(\omega,f)$ leads to the integrals
\[
I_{\mathscr{R}}(\omega,f):=\int_\Omega f(x) \cos(\omega g(x)) dx,
\quad I_{\mathscr{I}}(\omega,f):=\int_\Omega f(x) \sin(\omega g(x)) dx
\]
which will be considered in the following.
\vspace{\baselineskip}

The remainder of the paper is structured as follows.
Section \ref{sec:QTT_Approx} recalls the main ideas of the QTT approximation of functional vectors.
The central Section \ref{sec:ApproxProc} presents the basic QTT approximation scheme for the fast computation
of one- and multidimensional oscillating integrals. 
Section \ref{sec:QTT_Represent_I} describes and theoretically analyzes the QTT tensor 
approximation of special functions of interest, while Section \ref{Sec:NumExp} presents the numerical illustrations.

\section{Quantized-TT approximation of functional vectors}\label{sec:QTT_Approx}

A real tensor of order $d$ is defined as an element of finite dimensional Hilbert space
$\mathbb{W}_{\bf m} = \bigotimes_{\ell=1}^d X_\ell$
of the $d$-fold, $M_1\times ... \times M_d$ real-valued arrays, where
$X_\ell=\mathbb{R}^{M_\ell}$ and ${\bf m}=(M_1,\ldots ,M_d)$.
A tensor $\textsf{\textbf{A}}\in \mathbb{R}^{\cal I}$ with ${\cal I}=I_1\times ... \times I_d$,
can be represented entrywise by
\[
\textsf{\textbf{A}}=[\textsf{A}(i_1,...,i_d)]\equiv [\textsf{A}_{i_1,...,i_d}]
\quad \mbox{with}\quad
i_\ell\in I_\ell:=\{1,...,M_\ell\}. 
\]
The Euclidean scalar product, $\left\langle \cdot ,\cdot \right\rangle :
\mathbb{W}_{\bf m}\times \mathbb{W}_{\bf m}\to \mathbb{R}$,
is defined by
$$
\left\langle \textsf{\textbf{A}} , \textsf{\textbf{B}} \right\rangle:=
\sum_{{\bf i}\in {\cal I}} \textsf{A}({\bf i}) \textsf{B}({\bf i}), \quad
\textsf{\textbf{A}},\textsf{\textbf{B}}\in \mathbb{W}_{\bf m}.
$$ 
The storage size for a $d$th order tensor scales exponentially in $d$, 
$\operatorname{dim}(\mathbb{W}_{{\bf m}})=M_1\cdots M_d$ (the so-called "curse of dimensionality").
For ease of presentation we further assume that $M_\ell=M$ for $\ell=1,...,d$.

The efficient low-parametric representations of $d$th order tensors can be realized by using
low-rank separable decompositions (formats). 
The commonly used canonical and Tucker tensor formats \cite{KoldaB:07} 
are constructed by combination of
the simplest separable elements given by rank-$1$ tensors,
\[
 \textsf{\textbf{A}}= \bigotimes_{\ell=1}^d \textsf{A}^{(\ell)},
\quad \textsf{A}^{(\ell)}\in \mathbb{R}^{M},
\]
which can be stored with $d M$ numbers.

In this paper we apply the factorized representation of $d$th order tensors in the 
tensor train (TT) format \cite{ot-tt-2009}, which is the particular case of 
the so called matrix product states (MPS) decomposition. The latter was introduced
since longer in the physics community and successfully applied in quantum 
chemistry computations and in spin systems modeling 
\cite{white-dmrg-1993,Vidal-Effic-Simul-Quant-comput-2003,verstraete-dmrg-2004}.

For a given rank parameter ${\bf r}=(r_0,...,r_d)$, and the respective
index sets $J_\ell=\{1,...,r_\ell\}$ ($\ell=0,1,...,d$),
with the constraint $ J_0= J_d=\{1\}$ (i.e., $r_0=r_d=1$),
the rank-${\bf r}$ TT format contains all elements
$\textsf{\textbf{A}}=[\textsf{A}(i_1,...,i_d)]\in\mathbb{W}_{\bf m} $
which can be represented as the contracted products of $3$-tensors 
over the $d$-fold product index set ${\cal J}:=\times_{\ell=1}^d J_\ell$, such that
\[
\textsf{\textbf{A}}=\sum\limits_{{\bf \alpha}\in {\cal J}} \textsf{A}^{(1)}_{1,\alpha_1}
\otimes \textsf{A}^{(2)}_{\alpha_1,\alpha_2} \otimes 
\cdots \otimes \textsf{A}^{(d)}_{\alpha_{d-1},1},
\]
where 
$\textsf{A}^{(\ell)}_{\alpha_{\ell-1},\alpha_{\ell}}   \in \mathbb{R}^{M}$, 
($\ell=1,...,d$), and 
$\textsf{A}^{(\ell)}=[\textsf{A}^{(\ell)}_{\alpha_{\ell-1},\alpha_{\ell}}]$
is the vector-valued $r_{\ell-1}\times r_{\ell}$ matrix ($3$-tensor). 
The TT representation reduces the storage cost to $O(d r^2 M)$, $r=\max {r_\ell}$.

In the case of large mode size, the asymptotic storage for a $d$th order tensor 
can be reduced to logarithmic scale $O(d \log M)$ by using quantics-TT (QTT) 
tensor approximation \cite{khor-qtt-2009pre,khor-qtt-2011}. 
In our paper we apply this 
approximation techniques to long $M$-vectors generated by sampling certain highly-oscillating functions
on the uniform grid.\vspace{\baselineskip}

The QTT-type approximation of an $M$-vector with $M=q^L$, $L\in \mathbb{N}$, $q=2,3,...$,
is defined as the 
tensor decomposition (approximation) in the canonical, TT or some related format applied 
to a tensor 
obtained by the folding (reshaping) of the initial  long vector to an $L$-dimensional 
$q\times \ldots \times q$ data array that is thought as an element of the quantized tensor 
space $\mathbb{Q}_{{q},L}= \bigotimes_{j=1}^{L}\mathbb{K}^{q},
 \; \mathbb{K}\in \{\mathbb{R},\mathbb{C}\}$. A vector 
$
X=[X(i)]_{i\in I}\in \mathbb{W}_{{M}},
$
is reshaped to its quantics image in $\mathbb{Q}_{q,L}$ by $q$-adic folding, 
\[
\mathcal{F}_{q,L}: X \to \textsf{\textbf{Y}}
=[Y({\bf j})]\in \mathbb{Q}_{q,L}, \quad {\bf j}=\{j_{1},\ldots,j_{L}\}, 
\]
with  $j_{\nu}\in \{1,2\}$ for $\nu=1,...,L$,
where for fixed $i$, we have $Y({\bf j}):= X(i)$, and $j_\nu=j_\nu(i)$ is defined via $q$-coding,
$
j_\nu - 1= C_{-1+\nu}, 
$
such that the coefficients $C_{-1+\nu} $ are found from the
$q$-adic representation  of $i-1$ (binary coding for $q=2$),
\[
i-1 =  C_{0} +C_{1} q^{1} + \cdots + C_{L-1} q^{L-1}\equiv
\sum\limits_{\nu=1}^L (j_{\nu}-1) q^{\nu-1}.
\]
Assuming that for the rank-${\bf r}$-TT approximation of the quantics image $\textsf{\textbf{Y}}$ we have 
$r_k \leq r$, $k=1,\ldots ,L$, 
then the complexity of this tensor representation is reduced to the logarithmic scale
$$
q r^2 \log_q M \ll M. 
$$ 

The computational gain of the QTT approximation is justified by the 
perfect rank decomposition proven in \cite{khor-qtt-2011} for 
a wide class of function-related tensors obtained by sampling the corresponding functions
over a uniform or properly refined grid. In particular, this class of functions includes
complex exponentials, trigonometric functions, polynomials 
and Chebyshev polynomials, wavelet basis functions
(see also \cite{gras-tenz-2010,osel-constr-2013,VeBoKh:Ewald:14} for further results on QTT approximation). 

The low-rank QTT approximation can be also proven for
Gaussians, as well as for the 3D Newton, Yukawa and Helmholtz kernels. 

In the following we apply the QTT approximation method to the problem of fast integration
of highly oscillating functions introduced in the introduction.

\section{Approximation procedure}\label{sec:ApproxProc}

\subsection{One-dimensional integrals}
For simplicity we introduce the general idea of the approximation at first for one dimensional integrals
of the form
\beq
\label{int_orig}
I_{\mathscr{R}}(\omega,f):=\int_{-1}^1 f(x) \cos(\omega g(x)) dx.
\eeq
The case $I_{\mathscr{I}}(\omega,f):=\int_{-1}^1 f(x) \sin(\omega g(x)) dx$ and integrals over 
arbitrary intervals $[a,b]$ will not be treated separately since the procedure is completely 
analogous (after a suitable transformation to the interval $[-1,1]$). Recall that we are interested in the computation of integrals of the form \eqref{int_orig} for different functions $f$ and different frequencies $\omega$, hence the notation $I_{\mathscr{R}}(\omega,f)$.\\
In the following we assume that $f$ is a smooth nonoscillatory function that can be well 
approximated by polynomials of degree $N$. We introduce the 
Chebyshev polynomials by
\begin{align*}
T_0(x) &= 1,\quad T_1(x) = x,\\
T_{n+1}(x) &= 2xT_n(x)-T_{n-1}(x)\quad n=1,2,\ldots
\end{align*}

and seek an approximation of $f$ of the form
\beq
\label{ChebInterp}
f_N(x) = \sum_{k=0}^N c_k T_k(x) ,
\eeq
where $f_N$ interpolates $f$ in the Chebyshev-Gauss-Lobatto points
\[
x_k = \cos\left(\frac{k\pi}{N}\right)\quad 0\leq k \leq N.
\]
In this case the coefficients $c_k$ in \eqref{ChebInterp} are given by
\[
c_k = \frac{1}{\alpha_k N}\sum_{j=0}^{2N-1}  f\left(\cos\left(\frac{j\pi}{N}\right)\right)\cos\left(\frac{kj\pi}{N}\right),\quad k=0,\ldots,N,
\]
where
\[
\alpha_0 =\alpha_N = 2, \alpha_k = 1 \quad\text{for}\quad 1\leq k\leq N-1.
\]
The coefficients $c_k$ can be computed efficiently in $O(N\log N)$ operations using fast cosine 
transform methods. Recall that this polynomial approximation converges exponentially in $N$ if $f$ is 
sufficiently smooth. This is summarized in the following proposition.
\begin{proposition}\label{THM:ExpConv}
Let $f$ be analytic in the Bernstein regularity ellipse
\[
\mathcal{E}_\rho := \left\lbrace w\in\mathbb{C}:|w-1|+|w+1|\leq \rho + \rho^{-1}  \right\rbrace,
\]
with $\rho>1$. Furthermore let $|f(x)|\leq M_0$ in $\mathcal{E}_\rho$ for some $M_0>0$. Then the 
Chebyshev interpolant $f_N$ in \eqref{ChebInterp} satisfies
\[
\| f-f_N \|_\infty \leq \frac{4M_0}{\rho-1}\rho^{-N},\quad N\in\mathbb{N}_0.
\]
\end{proposition}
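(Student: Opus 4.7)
The proposition is a standard convergence estimate for Chebyshev interpolation; the plan is to compare $f_N$ against the full Chebyshev series of $f$ and then bound the tail geometrically using the Bernstein-ellipse hypothesis.

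First I would expand $f$ in its (infinite) Chebyshev series $f(x)=\sum_{k=0}^{\infty}a_k T_k(x)$, which converges uniformly on $[-1,1]$ because $f$ is analytic in a neighborhood of $[-1,1]$. The key auxiliary estimate is the coefficient bound $|a_k|\leq 2M_0\rho^{-k}$ for $k\geq 1$ (and $|a_0|\leq M_0$). To obtain it, I would use the Joukowski substitution $x=\tfrac{1}{2}(z+z^{-1})$, which maps the circle $|z|=\rho$ onto the boundary of $\mathcal{E}_\rho$, together with the contour-integral representation
\[
a_k=\frac{1}{\pi \mathrm{i}}\oint_{|z|=\rho}\frac{f\bigl(\tfrac{1}{2}(z+z^{-1})\bigr)}{z^{k+1}}\,dz.
\]
Using $|f|\leq M_0$ on $\mathcal{E}_\rho$ and estimating the integral on $|z|=\rho$ yields $|a_k|\leq 2M_0\rho^{-k}$.

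Next I would exploit the aliasing property of the Chebyshev--Gauss--Lobatto interpolant: for every $k\geq 0$, the polynomial interpolant of $T_k$ at the $N+1$ nodes $x_j=\cos(j\pi/N)$ is itself a Chebyshev polynomial of degree $\leq N$ (namely $T_{k \bmod 2N}$ up to a sign, with appropriate reflections). In particular, this interpolant is bounded by $1$ on $[-1,1]$, exactly like $T_k$ itself. Writing $\mathcal{I}_N$ for the interpolation operator and using its linearity,
\[
f(x)-f_N(x)=\sum_{k=N+1}^{\infty} a_k\bigl(T_k(x)-\mathcal{I}_N[T_k](x)\bigr),
\]
since $\mathcal{I}_N[T_k]=T_k$ for $k\leq N$.

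Finally I would take absolute values and use $|T_k(x)|\leq 1$ and $|\mathcal{I}_N[T_k](x)|\leq 1$ on $[-1,1]$ to bound each bracket by $2$, producing
\[
\|f-f_N\|_\infty \leq \sum_{k=N+1}^{\infty}2|a_k|\leq 4M_0\sum_{k=N+1}^{\infty}\rho^{-k}=\frac{4M_0}{\rho-1}\rho^{-N},
\]
which is exactly the claimed bound. The only genuinely nontrivial step is the coefficient estimate $|a_k|\leq 2M_0\rho^{-k}$; once that is in hand (via the Joukowski contour argument above), the rest is a direct telescoping/geometric-series computation combined with the elementary aliasing fact. I would expect the contour computation to be the main technical hurdle, mostly bookkeeping of the $\tfrac{1}{2}(z+z^{-1})$ substitution and the factor of $2$ coming from the two preimages of each $x\in[-1,1]$ on the circle $|z|=\rho$.
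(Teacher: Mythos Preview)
Your proposal is correct and follows the standard argument (Chebyshev coefficient decay via the Joukowski contour integral, then aliasing of $T_k$ under $\mathcal{I}_N$, then geometric summation). The paper does not actually prove this proposition; it simply writes ``See \cite{trefethen2013}'', and the argument you outline is precisely the one given in that reference, so there is nothing further to compare.
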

\begin{proof}
See \cite{trefethen2013}.
\end{proof}

We obtain an approximation of $I_{\mathscr{R}}(\omega,f)$ by replacing $f$ by $f_N$ and 
computing $I_{\mathscr{R}}(\omega,f_N)$ (see also \cite{Dominguez1,Dominguez2,Xiang2}, where this approach is refered to as Filon-Clenshaw-Curtis quadrature rule). The corresponding error can be easily estimated in terms of 
the error of the Chebyshev interpolation of $f$. It holds
\begin{align*}
\left| I_{\mathscr{R}}(\omega,f)-I_{\mathscr{R}}(\omega,f_N)   \right| &=\left| 
\int_{-1}^1 \left(f(x)-f_N(x)\right) \cos(\omega g(x)) dx\right|\\
&\leq \| f-f_N \|_\infty \int_{-1}^1| \cos(\omega g(x))| dx\\
&\leq 2 \| f-f_N \|_\infty.
\end{align*}
Under the assumptions of Proposition \ref{THM:ExpConv} we therefore have
\beq
\label{error_bound}
\left| I_{\mathscr{R}}(\omega,f)-I_{\mathscr{R}}(\omega,f_N)   
\right|\leq  \frac{8 M_0}{\rho-1}\rho^{-N},\quad N\in\mathbb{N}_0,
\eeq
i.e., exponential convergence of $I_{\mathscr{R}}(\omega,f_N)$ to the exact value with respect to $N$. Obviously this is also true for $I_{\mathscr{I}}(\omega,f)$. Note that the asymptotic order of this method is $\mathcal{O}(\omega^2)$ since $f_N$ interpolates $f$ in the endpoints of the interval.
\begin{remark}
In the following $f_N$ could also be written in the equivalent form
\beq
\label{LagrangeForm}
f_N(x) = \sum_{k=0}^N f(x_k) L_k(x) ,
\eeq
where
\[
L_k(x) = \prod_{\nu=0,\nu\neq k}^N\frac{x-x_{\nu}}{x_k-x_{\nu}}
\]
are the corresponding Lagrange polynomials. This representation has the advantage that the Chebyshev 
coefficient do not have to be computed. While the analysis of the scheme in Section \ref{sec:QTT_Represent_I} assumes $f_N$ to be in the form 
\eqref{ChebInterp}, the numerical experiments (see Section \ref{Sec:NumExp}) indicate that the results are very similar. 
In Section \ref{Sec:MultiDim} we will make use of \eqref{LagrangeForm}.
\end{remark}

We now turn to the question how to compute  $I_{\mathscr{R}}(\omega,f_N)$ efficiently. 
Since $f_N$ is supposed to approximate $f$ accurately this task is in general not easier 
than the original problem although $f_N$ is a polynomial. However the approximation of $f$ 
with Chebyshev polynomials leads to certain prototype functions of integrals that we want to 
precompute and store in the following. We have
\begin{align}
\label{LagExp}
I_{\mathscr{R}}(\omega,f_N) &= \sum_{k=0}^N c_k \int_{-1}^1 T_k(x) \cos(\omega g(x)) dx
= \sum_{k=0}^N c_k\,I_{\mathscr{R}}(\omega,T_k).
\end{align}
Thus, the question how to evaluate  $I_{\mathscr{R}}(\omega,f_N)$ boils down to the question 
how to efficiently evaluate  $I_{\mathscr{R}}(\omega,T_k)$ for a certain range of 
frequencies $\omega \in \left[\omega_{\min} ,\omega_{\max} \right]$, 
moderate $k$ (typically $k\leq 12$) and different oscillators $g$. Our goal is to 
precompute $I_{\mathscr{R}}(\omega,T_k)$ for fixed $k$ and $g$. Thus the 
function 
\beq
\label{int_func}
I_{\mathscr{R}}(\cdot,T_k):\left[\omega_{\min} ,\omega_{\max} \right]\rightarrow \mathbb{R}
\eeq
needs to be accurately represented and stored in order to compute an approximation 
of  $I_{\mathscr{R}}(\omega,f_N)$ via \eqref{LagExp}.  Note that this function needs to be approximated in a possibly large interval $\left[\omega_{\min} ,\omega_{\max} \right]$. Thus 
standard techniques like, e.g., polynomial interpolation/approximation are typically not effective
(see Figure \ref{quadError}). Once the prototype functions $I_{\mathscr{R}}(\cdot,T_k)$ have been precomputed for different $k$, integrals of the form \eqref{int_orig} can be easily approximated for different functions $f$ and frequencies $\omega$.


\begin{figure}[th]
\centering
\includegraphics[width=1.0\textwidth]{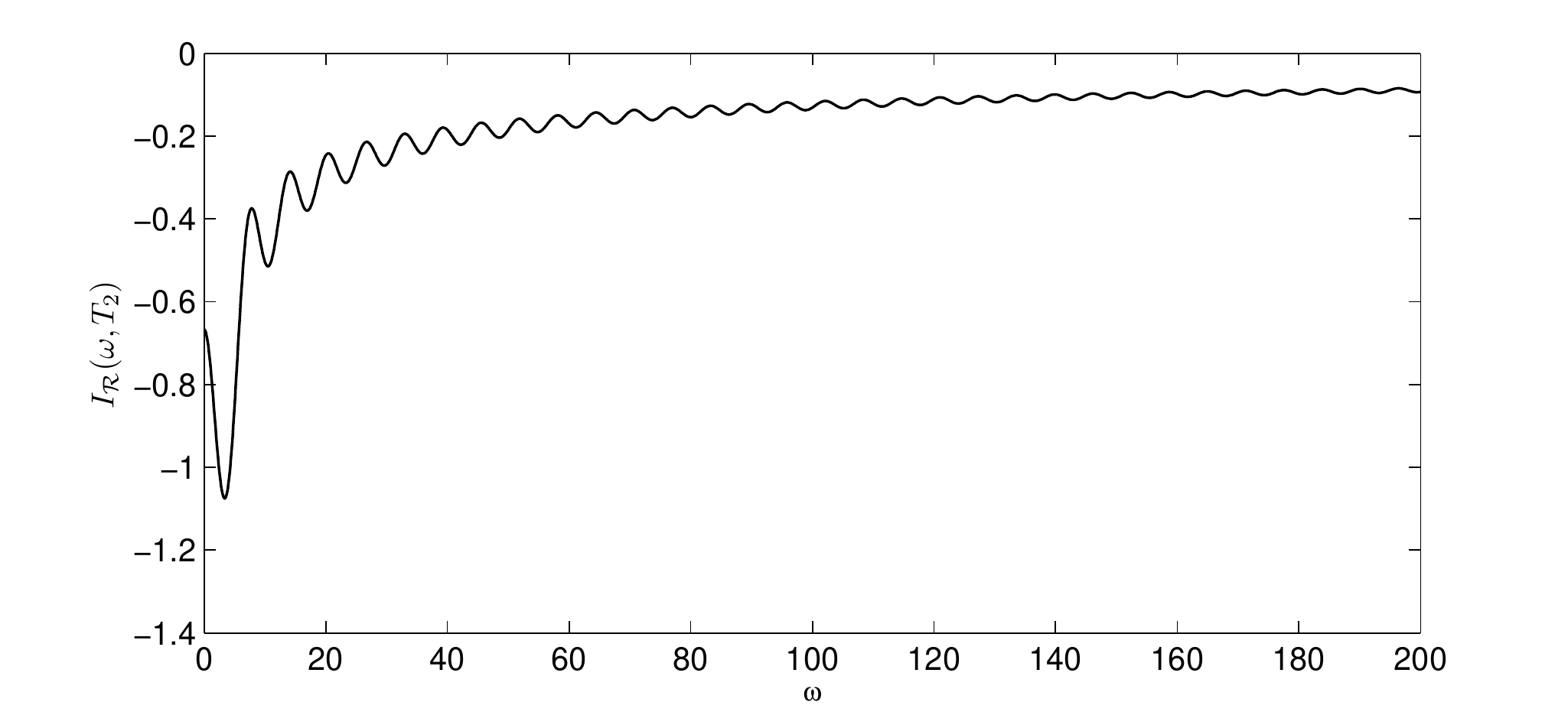}\caption{Plot of $I_{\mathscr{R}}(\omega,T_2)$ for $g(x) = x^2$.}%
\label{quadError}%
\end{figure}

In the present paper we will represent functions of the form \eqref{int_func} on the interval 
$\left[\omega_{\min} ,\omega_{\max} \right]$ pointwise on a very fine grid (see Remark \ref{Rem:GridDist} for a precise statement) via
low rank QTT tensor representations that were introduced in the preceding section. 
The straightforward  way to obtain such a representation is to evaluate $I_{\mathscr{R}}(\cdot ,T_k)$ 
at every point of the grid, to reshape the resulting vector to its quantics image and to approximate 
the resulting tensor in the TT-format. Since we seek to approximate $I_{\mathscr{R}}(\cdot,T_k)$ at 
grids that can easily exceed $2^{40}$ points this strategy is prohibitively expensive. Instead the 
final QTT tensor can be set up directly without computing the function at every point of the grid. 
This is achieved using a TT/QTT cross approximation scheme conceptually introduced \cite{Osel10}. 

It allows the computation 
of the QTT tensor using only a low number of the original tensor elements, i.e. by evaluating 
$I_{\mathscr{R}}(\cdot,T_k)$ only at a few $\omega\in \left[\omega_{\min} ,\omega_{\max} \right]$. 
More precisely, the rank-$r$ QTT-cross approximation of a $2^L$ tensor calls only $O(Lr^2)$  
entries of the original tensor. Notice that the required accuracy of the QTT approximation 
is achieved by the adaptive choice of the QTT rank $r$ within the QTT-cross approximation scheme.
The required computation of $I_{\mathscr{R}}(\cdot,T_k)$ at these 
special points is the most expensive part of the precomputation step since for fixed $\omega$ 
this is itself an oscillatory integral. Since the overall scheme to approximate 
\eqref{int_orig} is supposed to work in a black-box fashion (and we do not want to use specific 
knowledge about $g$) we suggest to compute $I_{\mathscr{R}}(\omega,T_k)$ for fixed $\omega$
(within the cross approximation scheme) by standard techniques like composite Gauss-Legendre quadrature. 
Depending on $\omega$ this certainly requires a high number of subintervals/quadrature points to achieve 
accurate results but since this has to be done only once in the precomputation step and due to 
its generality we think that this is a suitable strategy. In Section \ref{Sec:NumExp} we show that 
the time to precompute the QTT tensor is indeed very moderate in practice.

Once the functions $I_{\mathscr{R}}(\omega,T_k), \omega\in\left[\omega_{\min} ,\omega_{\max} \right]$ 
have been precomputed for $0\leq k\leq N$ and stored in the QTT format we obtain an approximation of  
$I_{\mathscr{R}}(\omega,f_N)$ (and therefore $I_{\mathscr{R}}(\omega,f)$) for a specific $\omega$ by 
evaluating the corresponding entry of the $(N+1)$ different QTT-tensors and combining them according 
to \eqref{LagExp} (see Algorithm \ref{Alg1}). Since the entry-wise evaluation of a rank-$r$ 
TT-tensor requires $O( L r^2)$ 
operations, the cost to obtain an approximation 
of $I_{\mathscr{R}}(\omega,f_N)$ from the precomputed tensors sums up to $O((N+1) L r^2)$ 
operations which is independent of $\omega$. 

\begin{remark}
\label{Rem:GridDist}
With the strategy above the function $I_{\mathscr{R}}(\cdot,T_k)$ is only represented at discrete grid points. 
In order to evaluate $I_{\mathscr{R}}(\omega,T_k)$ at an arbitrary point $\omega_0$ in 
$\left[\omega_{\min} ,\omega_{\max} \right]$, $\omega_0$ first has to be rounded to the nearest 
grid point. This leads to an additional error in 
the overall approximation of $I_{\mathscr{R}}(\omega_0,f)$ which can be easily estimated. We denote by $\tilde{\omega}_0$ the grid point that is closest to $\omega_0$. A Taylor expansion around $\tilde{\omega}_0$ shows that
\[
\left| I_{\mathscr{R}}(\tilde{\omega}_0,f)-I_{\mathscr{R}}(\omega_0,f) \right|\leq 2\left(\operatorname{e}^{|\tilde{\omega}_0-\omega_0|}-1\right).
\]
If the distance between two sampling points is $h$ the error due to rounding on this grid is therefore at most $2\left(\operatorname{e}^{h/2}-1\right)$. We therefore need $h<2\ln(\varepsilon_r/2+1)$ to assure that the error due to rounding does not exceed $\varepsilon_r$. In practice $2^{30}-2^{40}$ grid points are typically sufficient to keep the error due to rounding negligible. It becomes evident in Section \ref{Sec:NumExp} that the 
QTT approximation is well suited for such high dimensional tensors and that the ranks remain bounded.

The TT/QTT cross approximation that is used to compute the required QTT tensors is another source of errors. 
Also here we choose the approximation accuracy very high such that \eqref{error_bound} 
remains the dominant error bound.
\end{remark}

\begin{algorithm}
[H]%
\caption{Approximation of $I_{\mathscr{R}}(\omega_0,f)$}
\begin{algorithmic}
\REQUIRE $\bullet$ Precomputed QTT tensors $Q_k, 0\leq k\leq N$ that represent 
$I_{\mathscr{R}}(\omega,T_k), \omega\in\left[ \omega_{\min} ,\omega_{\max} \right]$ on a 
regular grid with $2^L$ points.\\
$\bullet$ Function $f$.\\
 $\bullet$ Value $ \omega_0\in\left[ \omega_{\min} ,\omega_{\max} \right]$. \vspace{1mm}
\hrule\medskip
\STATE Set $n\leftarrow 2^L$ and $h\leftarrow (\omega_{\max}-\omega_{\min})/(n-1)$.
\STATE Get coefficients $c_k$ of the interpolation of $f$ as in \eqref{ChebInterp}.
\STATE Set $\tilde{\omega}_0 = round(\omega_0/h-\omega_{\min}/h)$. \COMMENT {\emph{Round to the closest integer}}
\STATE Convert $\tilde{\omega_0}$ into binary representation  $\tilde{\omega}_{0,B}$. 
\COMMENT {\emph{This is the position of $\tilde{\omega}_0$ in the tensors}}
\RETURN $\sum_{k=0}^N c_k Q_k(\tilde{\omega}_{0,B})$.
\end{algorithmic}
\label{Alg1}
\end{algorithm}

\begin{remark}
\label{Rem:CrossApprox}
The availability of a cross approximation scheme is cruical for this method since otherwise the QTT 
tensors could not be computed for a large number of grid points. The main ingredient for 
its efficiency is the existence of the accurate low-rank QTT tensor approximation. As we will see in Section 
\ref{sec:QTT_Represent_I} the low rank is mainly due to the smoothness of 
$I_{\mathscr{R}}(\cdot,T_k)$. While this is always true in theory, special care has to be 
taken in practice if $I_{\mathscr{R}}(\omega,T_k)\equiv 0$, which happens if $k$ is 
odd and $g(x)$ is an even or odd function. If the cross approximation algorithm is 
applied in this case and $I_{\mathscr{R}}(\omega,T_k)$ is not evaluated exactly, it will 
try to compress a very noisy (quadrature) error function which will in general not be of low rank. 
These cases therefore have to be treated manually. The same holds for $I_{\mathscr{I}}(\omega,T_k)$.
\end{remark}

\subsection{Multi-dimensional integrals}
\label{Sec:MultiDim}

The ideas of the preceding subsection can be extended in a straightforward way to multi-dimensional 
integrals. We consider integrals of the form
\beq
\label{multiInt}
I_{\mathscr{R}}(\omega,f):= \int_{[-1,1]^d} f\left( y\right) \cos(\omega g(y)) dy ,
\eeq
where $f:[-1,1]^d\rightarrow \R$ is a smooth function and $g:[-1,1]^d\rightarrow \R$. 
We approximate $f$ by a $d$-dimensional interpolation function 
\[
f_N(y) = \sum_{j_1=0}^N\cdots\sum_{j_d=0}^N f\left( x_{j_1},\ldots ,x_{j_d} \right)
L_{j_1}(y_1)\cdots L_{j_d}(y_d),
\]
were $x_j, 0\leq j\leq N$ are again the Chebyshev points and $L_k$ are the Lagrange polynomials.
For a class of analytic function the $\varepsilon$-approximation is achieved with 
$N=|\log \varepsilon|$. Replacing $f$ by $f_N$ leads to
\beq
\label{multiApprox}
I_{\mathscr{R}}(\omega,f_N) =  \sum_{j_1=0}^N\cdots\sum_{j_d=0}^N f
\left( x_{j_1},\ldots ,x_{j_d} \right)   
\int_{[-1,1]^d}\underbrace{L_{j_1}(y_1)\cdots L_{j_d}(y_d)}_{=:L_{j_1,\ldots ,j_d}(y)}  
\cos(\omega g(y)) dy.
\eeq
Thus, by precomputing and storing $I_{\mathscr{R}}(\omega,L_{j_1,\ldots ,j_d}(y))$ for 
$\omega\in [\omega_{\min},\omega_{\max}]$ and every 
$(j_1,\ldots ,j_d)\in\left\lbrace 0,\ldots ,N\right\rbrace^d$ 
in the QTT format as described before, an approximation of $I_{\mathscr{R}}(\omega,f)$ 
for a specific $\omega$ can be obtained by evaluating the corresponding entries in the QTT 
tensors and combining them according to \eqref{multiApprox}.

If the function $g(y)$ allows certain low-rank separable representation, for example,
$g(y)=y_1^2 + y_2^2 + y_3^2$, then the representation (\ref{multiApprox}) can be presented
in the rank-$N$ Tucker type tensor format, where the $d$-dimensional integrals 
in the right-hand site of (\ref{multiApprox}) are reduced to 1D integrations.
Notice that in the latter example the representation in complex arithmetics 
leads to lower rank parameters.

\section{QTT tensor approximation of the functions $I_{\mathscr{R}}(\cdot,T_k)$ and $I_{\mathscr{I}}(\cdot,T_k)$}
\label{sec:QTT_Represent_I}

In this section we show that the functions  $I_{\mathscr{R}}(\cdot,T_k)$ and $I_{\mathscr{I}}(\cdot,T_k)$, 
sampled on a uniform grid, can be efficiently represented in the QTT format by deriving explicit ranks 
bounds of these approximations.

A first important observation is that $I_{\mathscr{R}}(\omega,T_k)$ and $I_{\mathscr{I}}(\omega,T_k)$ 
are very smooth functions with respect to $\omega$, independent of the smoothness of $g$.

\begin{lemma}\label{lem:entire}
The functions $I_{\mathscr{R}}(\cdot,T_k):\Co\rightarrow \Co$ and 
$I_{\mathscr{I}}(\cdot,T_k):\Co\rightarrow \Co$ are entire.
\end{lemma}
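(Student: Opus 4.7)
The plan is to exhibit an explicit power series in $\omega$ for each of $I_{\mathscr{R}}(\cdot,T_k)$ and $I_{\mathscr{I}}(\cdot,T_k)$ and to check that its radius of convergence is infinite. Extend $\cos$ and $\sin$ to $\mathbb{C}$ by their standard series, and recall the running assumption $|g(x)|\le 1$ for $x\in[-1,1]$ together with $|T_k(x)|\le 1$. For any complex $\omega$ I can write
\[
\cos(\omega g(x)) \;=\; \sum_{n=0}^{\infty} (-1)^n\,\frac{\omega^{2n}\,g(x)^{2n}}{(2n)!},
\qquad
\sin(\omega g(x)) \;=\; \sum_{n=0}^{\infty} (-1)^n\,\frac{\omega^{2n+1}\,g(x)^{2n+1}}{(2n+1)!}.
\]

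First I would fix an arbitrary $R>0$ and observe that on the compact set $\{|\omega|\le R\}\times[-1,1]$ each of these series is dominated termwise by $R^{2n}/(2n)!$ or $R^{2n+1}/(2n+1)!$, so it converges uniformly there. Multiplying by $T_k(x)$ preserves the uniform bound, so Fubini (or Weierstrass' theorem on uniform limits) allows me to interchange summation and integration over $[-1,1]$. This yields
\[
I_{\mathscr{R}}(\omega,T_k) \;=\; \sum_{n=0}^{\infty} (-1)^n\,\frac{\omega^{2n}}{(2n)!}\,a_n,
\qquad
a_n := \int_{-1}^{1} T_k(x)\,g(x)^{2n}\,dx,
\]
and an analogous formula for $I_{\mathscr{I}}(\omega,T_k)$ with odd powers. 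Because $|T_k|\le 1$ and $|g|\le 1$ on $[-1,1]$, the coefficients satisfy $|a_n|\le 2$, so the power series has coefficients bounded in modulus by $2/(2n)!$ (resp. $2/(2n+1)!$). Its radius of convergence is therefore $+\infty$, and since $R$ was arbitrary the functions are entire.

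I do not anticipate a real obstacle: the argument is a standard application of the principle that a parameter integral of a function holomorphic in the parameter and dominated uniformly on compacts is itself holomorphic. The only points that need mentioning are that no regularity of $g$ is used (only $|g|\le 1$ and integrability on $[-1,1]$), and that the derivation in fact produces the coefficients of the Taylor expansion of $I_{\mathscr{R}}(\cdot,T_k)$ and $I_{\mathscr{I}}(\cdot,T_k)$ at the origin, which may be useful later for the rank-bound analysis of the QTT approximation in the remainder of Section \ref{sec:QTT_Represent_I}.
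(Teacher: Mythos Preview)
Your proof is correct and takes a genuinely different route from the paper. The paper writes $\omega=\omega_{\mathrm{Re}}+i\omega_{\mathrm{Im}}$, expands $\cos(\omega g(x))$ via the addition formula into real and imaginary parts $u(\omega_{\mathrm{Re}},\omega_{\mathrm{Im}})$ and $v(\omega_{\mathrm{Re}},\omega_{\mathrm{Im}})$, and then verifies the Cauchy--Riemann equations by differentiating under the integral sign. Your approach instead expands $\cos(\omega g(x))$ and $\sin(\omega g(x))$ as power series in $\omega$, uses the uniform bound $|T_k g^m|\le 1$ to justify interchanging summation and integration, and reads off an everywhere-convergent power series in $\omega$. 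Both arguments are elementary; yours has the slight advantage that it avoids the (easy but unstated in the paper) justification of differentiation under the integral sign, and it delivers the Taylor coefficients $a_n=\int_{-1}^1 T_k(x)g(x)^{2n}\,dx$ explicitly, which, as you note, could feed into the later rank analysis. The paper's approach, on the other hand, stays closer to the real-variable viewpoint and makes the structure $u+iv$ visible, which is in the spirit of the subsequent Lemma~\ref{lem:Bound_on_I} where the growth in $\omega_{\mathrm{Im}}$ is estimated.
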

\begin{proof}  We have
\begin{align*}
I_{\mathscr{R}}(\omega_{\operatorname{Re}}+i \omega_{\operatorname{Im}},T_k) &= 
\int_{-1}^1 T_k(x) \cos(\omega_{\operatorname{Re}}g(x)+i \omega_{\operatorname{Im}} g(x)) dx\\
&=\int_{-1}^1 T_k(x) \cos(\omega_{\operatorname{Re}}g(x))\cosh( \omega_{\operatorname{Im}} g(x)) dx \\
&\quad + i\int_{-1}^1 -T_k(x) \sin(\omega_{\operatorname{Re}}g(x))\sinh( \omega_{\operatorname{Im}} g(x)) dx\\
&=: u(\omega_{\operatorname{Re}}, \omega_{\operatorname{Im}}) + 
i\cdot v(\omega_{\operatorname{Re}}, \omega_{\operatorname{Im}})
\end{align*}
The real valued functions $u$ and $v$ have continuous first partial derivatives 
and satisfy the Cauchy-Riemann differential equations:
\[
\frac{\partial u}{\partial \omega_{\operatorname{Re}} }= \int_{-1}^1 -T_k(x)g(x) 
\sin(\omega_{\operatorname{Re}}g(x))\cosh( \omega_{\operatorname{Im}} g(x)) dx 
= \frac{\partial v}{\partial \omega_{\operatorname{Im}} },
\]
\[
\frac{\partial u}{\partial \omega_{\operatorname{Im}} }= 
\int_{-1}^1 T_k(x)g(x) \cos(\omega_{\operatorname{Re}}g(x))
\sinh( \omega_{\operatorname{Im}} g(x)) dx = -\frac{\partial v}{\partial 
\omega_{\operatorname{Re}} }.
\]
Thus $I_{\mathscr{R}}(\omega,T_k)$ is holomorphic and therefore entire. 
A similar reasoning applies to $I_{\mathscr{I}}(\omega,T_k)$.
\end{proof}

In some applications the so-called $sinc$-approximation method can be applied as an alternative to the polynomial
approximation. In this case the band-limitedness of the target function plays an important role.
The following lemma provides the respective analysis.
\begin{lemma}
Let $g\in C^1[-1,1]$ be invertible. Then the function $I(\cdot,f,g):\R\rightarrow \R$ is 
band-limited.
\end{lemma}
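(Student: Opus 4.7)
The plan is to exhibit $I(\cdot,f,g)$ explicitly as the Fourier transform of a compactly supported function, from which band-limitedness follows by the usual definition. First I would perform the change of variables $u=g(x)$ in
\[
I(\omega,f,g)=\int_{-1}^1 f(x)\,e^{i\omega g(x)}\,dx.
\]
Since $g\in C^1[-1,1]$ is invertible it is strictly monotone, so $g^{-1}:[a,b]\to[-1,1]$ exists and is absolutely continuous, where $[a,b]$ denotes the image $g([-1,1])$. The substitution gives
\[
I(\omega,f,g)=\int_{a}^{b} f\bigl(g^{-1}(u)\bigr)\bigl(g^{-1}\bigr)'(u)\,e^{i\omega u}\,du
=\int_{\R} F(u)\,e^{i\omega u}\,du,
\]
where $F(u):=f(g^{-1}(u))(g^{-1})'(u)\,\chi_{[a,b]}(u)$.

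Next I would invoke the standing assumption $|g(x)|\le 1$, which forces $[a,b]\subseteq[-1,1]$, so that $F$ is supported in the compact set $[-1,1]$. Provided $F\in L^1(\R)$ (equivalently, the original integral is absolutely convergent, which follows from $\int_{-1}^1 |f(x)|\,dx<\infty$ together with the substitution rule written in the form $F(u)\,du$ being the push-forward of $f(x)\,dx$ under $g$), the identity above shows that $I(\cdot,f,g)$ is precisely the Fourier transform of the compactly supported integrable function $F$. By the standard definition this is what it means for $I(\cdot,f,g)$ to be band-limited (with band contained in $[-1,1]$); by the Paley--Wiener theorem it in fact extends to an entire function of exponential type at most one, consistent with Lemma \ref{lem:entire}.

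The main obstacle, as I see it, is justifying the change of variables when $g'$ is allowed to vanish at isolated points of $[-1,1]$: then $(g^{-1})'$ can be unbounded and one has to argue that $F$ is still integrable. I would handle this by noting that the substitution does not require $g^{-1}$ to be $C^1$; the identity $\int_{-1}^1 \phi(x)\,dx=\int_a^b \phi(g^{-1}(u))(g^{-1})'(u)\,du$ for a strictly monotone absolutely continuous $g$ follows from the absolute continuity of $g^{-1}$ (Lebesgue's change-of-variables theorem), applied with $\phi(x)=f(x)e^{i\omega g(x)}$. The rest is purely a matter of recognising a Fourier transform and reading off compact support of the pre-image measure.
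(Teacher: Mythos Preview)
Your proposal is correct and follows essentially the same route as the paper: perform the substitution $u=g(x)$, recognise the result as the (inverse) Fourier transform of a function supported on the compact interval $g([-1,1])$, and conclude band-limitedness. The paper writes the cutoff with Heaviside functions rather than a characteristic function, but this is cosmetic; if anything, your treatment is more careful in flagging the issue of possible zeros of $g'$ and in invoking the standing assumption $|g|\le 1$ to pin down the support.
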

\begin{proof} Let $h$ be the inverse function of $g$ and $H$ denote the Heaviside step function. 
Then
\begin{align*}
I(\omega,f,g) &= \int_{-1}^1 f(x) \operatorname{e}^{i\omega g(x)} dx = 
\int_{g(-1)}^{g(1)} f(h(x))h^\prime(x) \operatorname{e}^{i\omega x} dx\\
&= \frac{1}{2\pi}\int_{-\infty}^{\infty} 2\pi f(h(x))h^\prime(x)H(g(1)-x)H(x-g(-1)) 
\operatorname{e}^{i\omega x} dx\\
&= \mathcal{F}^{-1}\left[ 2\pi f(h(x))h^\prime(x)H(g(1)-x)H(x-g(-1)) \right](\omega),
\end{align*}
where $\mathcal{F}^{-1}$ denotes the inverse Fourier transform. 
This shows that $\mathcal{F}(I(\cdot,f,g))$ is compactly supported.
\end{proof}

Next, we establish bounds of $I_{\mathscr{R}}(\omega,T_k)$ and $I_{\mathscr{I}}(\omega,T_k)$ in the complex domain.
\begin{lemma}\label{lem:Bound_on_I}
Let $\omega\in\Co$, $k\in\N_0$ and $|g(x)|\leq 1$ for $x\in[-1,1]$. Then
\[
\left| I_{\mathscr{R}}(\omega,T_k)\right|\leq 2 
\operatorname{e}^{|\omega_{\operatorname{Im}}|}\quad\text{and}
\quad \left| I_{\mathscr{I}}(\omega,T_k)\right|\leq 2 
\operatorname{e}^{|\omega_{\operatorname{Im}}|}.
\]
\end{lemma}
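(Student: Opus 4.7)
The plan is to estimate the integrand pointwise on $[-1,1]$ and then integrate, exploiting the standard bound $|T_k(x)|\le 1$ on $[-1,1]$ together with the assumption $|g(x)|\le 1$.

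First I would write $\omega = \omega_{\operatorname{Re}} + i\omega_{\operatorname{Im}}$ and, as in the proof of Lemma~\ref{lem:entire}, decompose
\[
\cos(\omega g(x)) = \cos(\omega_{\operatorname{Re}}g(x))\cosh(\omega_{\operatorname{Im}}g(x)) - i\sin(\omega_{\operatorname{Re}}g(x))\sinh(\omega_{\operatorname{Im}}g(x)).
\]
The key calculation is then the pointwise modulus estimate
\[
|\cos(\omega g(x))|^2 = \cos^2(\omega_{\operatorname{Re}}g(x))\cosh^2(\omega_{\operatorname{Im}}g(x)) + \sin^2(\omega_{\operatorname{Re}}g(x))\sinh^2(\omega_{\operatorname{Im}}g(x)),
\]
which, after using $\sinh^2 = \cosh^2 - 1$ and $\sin^2 + \cos^2 = 1$, collapses to $\cosh^2(\omega_{\operatorname{Im}}g(x)) - \sin^2(\omega_{\operatorname{Re}}g(x))$. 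This yields the clean bound $|\cos(\omega g(x))| \le \cosh(\omega_{\operatorname{Im}}g(x))$. Since $|g(x)|\le 1$ and $\cosh$ is even and increasing on $[0,\infty)$, we further have $\cosh(\omega_{\operatorname{Im}}g(x)) \le \cosh(|\omega_{\operatorname{Im}}|) \le e^{|\omega_{\operatorname{Im}}|}$.

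Combining this with $|T_k(x)|\le 1$ and the fact that the interval $[-1,1]$ has length $2$, the triangle inequality for integrals gives
\[
|I_{\mathscr{R}}(\omega,T_k)| \le \int_{-1}^1 |T_k(x)|\,|\cos(\omega g(x))|\,dx \le 2e^{|\omega_{\operatorname{Im}}|},
\]
which is the claimed bound. For $I_{\mathscr{I}}$ I would run exactly the same argument with $\sin(\omega g(x)) = \sin(\omega_{\operatorname{Re}}g(x))\cosh(\omega_{\operatorname{Im}}g(x)) + i\cos(\omega_{\operatorname{Re}}g(x))\sinh(\omega_{\operatorname{Im}}g(x))$; the identical manipulation shows $|\sin(\omega g(x))|^2 = \cosh^2(\omega_{\operatorname{Im}}g(x)) - \cos^2(\omega_{\operatorname{Re}}g(x)) \le \cosh^2(\omega_{\operatorname{Im}}g(x))$, whence the same $2e^{|\omega_{\operatorname{Im}}|}$ bound follows.

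There is really no hard step here; the only mildly delicate point is recognizing the trigonometric/hyperbolic identity that collapses the mixed $\cos^2\cosh^2 + \sin^2\sinh^2$ expression to something dominated by $\cosh^2$. Everything else is pointwise estimation, the assumption $|g(x)|\le 1$, and $\|T_k\|_{L^\infty[-1,1]} \le 1$.
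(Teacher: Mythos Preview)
Your proof is correct. The paper's proof follows the same overall strategy (pointwise bound on the integrand, $|T_k|\le 1$, integrate over an interval of length $2$), but obtains the key pointwise estimate more quickly by bounding the full complex exponential: writing $e^{i\omega g(x)}=e^{i\omega_{\operatorname{Re}}g(x)}e^{-\omega_{\operatorname{Im}}g(x)}$ gives $|e^{i\omega g(x)}|=e^{-\omega_{\operatorname{Im}}g(x)}\le e^{|\omega_{\operatorname{Im}}|}$ immediately, and the bounds on $I_{\mathscr{R}}$ and $I_{\mathscr{I}}$ then follow from $\cos z=\tfrac12(e^{iz}+e^{-iz})$, $\sin z=\tfrac{1}{2i}(e^{iz}-e^{-iz})$. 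Your route via the explicit identity $|\cos(a+ib)|^2=\cosh^2 b-\sin^2 a$ arrives at the same $\cosh$ bound but with a bit more computation; on the other hand it is self-contained and treats $I_{\mathscr{R}}$ and $I_{\mathscr{I}}$ directly without passing through $I(\omega,T_k)$.
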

\begin{proof} It holds
\begin{align*}
\left| I(\omega,T_k)\right|&= \left| \int_{-1}^1 T_k(x) \operatorname{e}^{i\omega g(x)} dx 
\right| = \left| \int_{-1}^1 T_k(x) \operatorname{e}^{i\omega_{\operatorname{Re}} g(x)-
\omega_{\operatorname{Im}} g(x)} dx \right|\\
&\leq  \int_{-1}^1  \operatorname{e}^{-\omega_{\operatorname{Im}} g(x)}  \left|  
\operatorname{e}^{i\omega_{\operatorname{Re}} g(x)}\right| dx \\
&\leq 2 \operatorname{e}^{|\omega_{\operatorname{Im}}|}.
\end{align*}
This bound holds both for $I_{\mathscr{R}}(\omega,T_k)$ and 
$I_{\mathscr{R}}(\omega,T_k)$.
\end{proof}

The next theorem establishes explicit rank bounds of the QTT approximations of $I_{\mathscr{R}}(\omega,T_k)$ and $I_{\mathscr{I}}(\omega,T_k)$.
\begin{theorem}
\label{THM:main}
Let the function  $I_{\mathscr{R}}(\omega,T_k)$ or $I_{\mathscr{I}}(\omega,T_k)$ be 
sampled on the uniform grid $\omega_{\min}=x_0<x_1<\ldots <x_N=\omega_{\max}, x_i = \omega_{\min}+hi$  in the interval 
$[-\omega_{\min},\omega_{\max}]$ with $N=2^L$ and call the resulting vector $\underline{\nu}$. 
Let furthermore $1>\varepsilon >0$ be given.
Then there exists a QTT approximation $\nu_{\text{QTT}}$ of  $\underline{\nu}$ with 
ranks bounded by
\begin{equation}
\label{rankbound}
r\left( \nu_{\text{QTT}} \right) \leq  1+ \ln\left(\frac{8}{\operatorname{e}-1}\right)+C(\omega_{\max}-\omega_{\min})+\ln\left(\frac{1}{\varepsilon}\right) 
\end{equation}
with $C:=\frac{1}{4}\sqrt{(\operatorname{e}+\operatorname{e}^{-1})^2-4}\approx 0.59$ and accuracy
\[
|\underline{\nu}- \nu_{\text{QTT}}|\leq \varepsilon.
\]
\end{theorem}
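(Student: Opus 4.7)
The plan is to combine the complex-analytic growth estimate of Lemma~\ref{lem:Bound_on_I} with the Chebyshev interpolation bound of Proposition~\ref{THM:ExpConv} to build a polynomial of controllable degree which approximates $I_{\mathscr{R}}(\cdot,T_k)$ (respectively $I_{\mathscr{I}}(\cdot,T_k)$) to accuracy $\varepsilon$ on $[\omega_{\min},\omega_{\max}]$, and then to invoke the standard QTT rank bound saying that a degree-$N$ polynomial sampled on a uniform grid has QTT rank at most $N+1$.

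First I would affinely rescale the interval by $\phi(s)=\tfrac{\omega_{\max}+\omega_{\min}}{2}+\tfrac{\omega_{\max}-\omega_{\min}}{2}s$, so that $\tilde{I}(s):=I_{\mathscr{R}}(\phi(s),T_k)$ is entire in $s$ by Lemma~\ref{lem:entire}. Next I would fix the Bernstein ellipse $\mathcal{E}_\rho$ with $\rho=\mathrm{e}$; its image under $\phi$ is an ellipse in the $\omega$-plane whose maximal imaginary part equals $\tfrac{\omega_{\max}-\omega_{\min}}{2}\cdot\tfrac{\rho-\rho^{-1}}{2}=C(\omega_{\max}-\omega_{\min})$ with $C=(\mathrm{e}-\mathrm{e}^{-1})/4$. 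The identity $C=\tfrac14\sqrt{(\mathrm{e}+\mathrm{e}^{-1})^2-4}$ then follows from $(\mathrm{e}+\mathrm{e}^{-1})^2-4=(\mathrm{e}-\mathrm{e}^{-1})^2$, matching the constant in the theorem.

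Combining this geometric observation with Lemma~\ref{lem:Bound_on_I} yields $|\tilde{I}(s)|\le 2\,\mathrm{e}^{C(\omega_{\max}-\omega_{\min})}$ for all $s\in\mathcal{E}_{\mathrm{e}}$. Proposition~\ref{THM:ExpConv}, applied with $M_0=2\,\mathrm{e}^{C(\omega_{\max}-\omega_{\min})}$ and $\rho=\mathrm{e}$, then produces a degree-$N$ Chebyshev interpolant $\tilde{I}_N$ of $\tilde{I}$ satisfying
\[
\|\tilde{I}-\tilde{I}_N\|_\infty \le \frac{8}{\mathrm{e}-1}\,\mathrm{e}^{C(\omega_{\max}-\omega_{\min})}\,\mathrm{e}^{-N}.
\]
Imposing that the right-hand side is at most $\varepsilon$ gives the explicit lower bound $N \ge \ln\!\bigl(8/(\mathrm{e}-1)\bigr)+C(\omega_{\max}-\omega_{\min})+\ln(1/\varepsilon)$.

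To close the argument I would transfer this approximation to the uniform grid $\{x_i\}$: the samples of the polynomial $\tilde{I}_N\circ\phi^{-1}$ define a vector $\nu_{\text{QTT}}$ which, by construction, satisfies $|\underline{\nu}-\nu_{\text{QTT}}|\le\varepsilon$ entrywise. Since $\tilde{I}_N\circ\phi^{-1}$ is a polynomial of degree at most $N$ in $\omega$, the standard QTT result (see \cite{khor-qtt-2011}) gives $r(\nu_{\text{QTT}})\le N+1$, matching \eqref{rankbound}; the case $I_{\mathscr{I}}(\cdot,T_k)$ is entirely analogous since Lemma~\ref{lem:Bound_on_I} applies to both. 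The only real subtlety is the calibration of $\rho$: choosing $\rho=\mathrm{e}$ makes the convergence factor $\rho^{-N}=\mathrm{e}^{-N}$ pair cleanly with $\ln(1/\varepsilon)$ and simultaneously fixes the constant $C$ in its radical form, so the main work is really in matching the continuous Chebyshev bound to the uniform-grid QTT-rank estimate in the correct normalization rather than in any analytic obstruction.
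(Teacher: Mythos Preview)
Your proposal is correct and follows essentially the same route as the paper's own proof: rescale to $[-1,1]$, apply Proposition~\ref{THM:ExpConv} with $\rho=\mathrm{e}$ and the bound from Lemma~\ref{lem:Bound_on_I}, and conclude via the QTT rank bound for degree-$N$ polynomials. If anything, you are slightly more explicit than the paper in two places: you spell out geometrically that the semi-minor axis of $\mathcal{E}_{\mathrm{e}}$ is $(\mathrm{e}-\mathrm{e}^{-1})/2$ (which is where the constant $C$ originates), and you note that the composition $\tilde{I}_N\circ\phi^{-1}$ remains a polynomial of the same degree in $\omega$, which is needed since the uniform grid lives in the $\omega$-variable.
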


\begin{proof}
We perform the proof only for $I_{\mathscr{R}}(\omega,T_k)$ since the case 
$I_{\mathscr{I}}(\omega,T_k)$ is analogous. We consider a polynomial approximation of $I_{\mathscr{R}}(\omega,T_k)$ as in Proposition \ref{THM:ExpConv}. We define the linear scaling
\[
\chi_{\omega_{\min},\omega_{\max}}:[-1,1]\rightarrow [\omega_{\min},\omega_{\max}], \quad \chi_{\omega_{\min},\omega_{\max}}(\omega) = \frac{\omega_{\max}-\omega_{\min}}{2}\cdot\omega+\frac{\omega_{\max}+\omega_{\min}}{2}
\]
and the transformed function
\[
I_{\mathscr{R}}^{[-1,1]}(\cdot,T_k) :[-1,1]\rightarrow\mathbb{R},\quad I_{\mathscr{R}}^{[-1,1]}(\omega,T_k) = I_{\mathscr{R}}(\chi_{\omega_{\min},\omega_{\max}}(\omega),T_k)
\]
As in Lemma \ref{lem:Bound_on_I} we can show that
\[
\left|I_{\mathscr{R}}^{[-1,1]}(\omega,T_k)\right|\leq 2\operatorname{e}^{|\omega_{\text{Im}}|\cdot\left( \frac{\omega_{\max}-\omega_{\min}}{2}\right)}
\]
for $\omega\in\mathbb{C}$. We approximate $I_{\mathscr{R}}^{[-1,1]}(\omega,T_k)$ in $[-1,1]$ with a Chebyshev interpolant $I_N^{[-1,1]}(\omega)$. Since  $I_{\mathscr{R}}^{[-1,1]}(\omega,T_k)$ is entire and due to the bound above in the complex plane, Proposition \ref{THM:ExpConv} (with $\rho=\operatorname{e}$) shows that
\[
\|I_{\mathscr{R}}^{[-1,1]}(\cdot,T_k) - I_N^{[-1,1]}\|_\infty \leq \frac{4M_0}{\operatorname{e}-1}\operatorname{e}^{-N}
\]
with
\[
M_0 = 2\operatorname{e}^{C(\omega_{\max}-\omega_{\min})},\quad C:=\frac{1}{4}\sqrt{(\operatorname{e}+\operatorname{e}^{-1})^2-4}.
\]
Thus we have to choose
\[
N\geq \ln\left(\frac{4M_0}{\operatorname{e}-1}\right)+\ln\left(\frac{1}{\varepsilon}\right)
\]
in order to assure that the approximation error satisfies $\|I_{\mathscr{R}}^{[-1,1]}(\cdot,T_k) - I_N^{[-1,1]}\|_\infty\leq \varepsilon$. Since polynomials of degree $N$ sampled on a uniform grid have QTT ranks bounded by $N+1$ the assertion follows. 
\end{proof}

Theorem \ref{THM:main} shows that the QTT ranks depend only logarithmically on the desired accuracy of the approximation. On the other hand it also suggests that the ranks depend linearly on the size of the approximation interval $[\omega_{\min},\omega_{\max}]$. Such a linear dependence could not be observed in practice. We demonstrate in Section \ref{Sec:NumExp} that the ranks stay small even if large intervals $[\omega_{\min},\omega_{\max}]$ are considered.

\section{Numerical experiments}
\label{Sec:NumExp}

In this section we present the results of the numerical experiments. All computations were performed in 
MATLAB using the TT-Toolbox 2.2  (\url{http://spring.inm.ras.ru/osel/}).\\
The main goal in this section is to show that the functions  $I_{\mathscr{R}}(\omega,T_k)$ and $I_{\mathscr{I}}(\omega,T_k)$ indeed admit a representation in the QTT format with low ranks. As discussed above low ranks are cruical for the cross approximation as well as the efficiency for Algorithm \ref{Alg1}.

\subsection{One-dimensional integrals}

At first we verify the exponential convergence (with respect to $N$) of the scheme that is predicted 
in \eqref{error_bound}. For this we set $g(x)=x^2$ and $g(x) = \sin(x+1)$ and precompute the QTT 
tensors representing the functions $I_{\mathscr{R}}(\omega,T_k)$ and $I_{\mathscr{I}}(\omega,T_k)$  
with $\omega\in [0,1000]$ at $2^{63}$ points. The relative error 
$err_{\omega}(N) = \left| I(\omega,f)- I(\omega,f_N) \right|/ \left| I(\omega,f)\right|$ for 
different values of $\omega$ and functions $f$ is illustrated in Figure \ref{FIG:Convergence}. 
The ``exact'' value $ I(\omega,f)$ was computed using a high order composite Gauss-Legendre quadrature rule. 
It becomes evident that the quadrature error decays indeed exponentially and that already low numbers 
of $N$ lead to accurate approximations.

\begin{figure}[!h]
\centering
\subfigure[Relative error of the approximation of  $\int_{-1}^1 \cos(x)\operatorname{e}^{i\omega x^2}$]{
\centering
\includegraphics[width=0.47\textwidth]{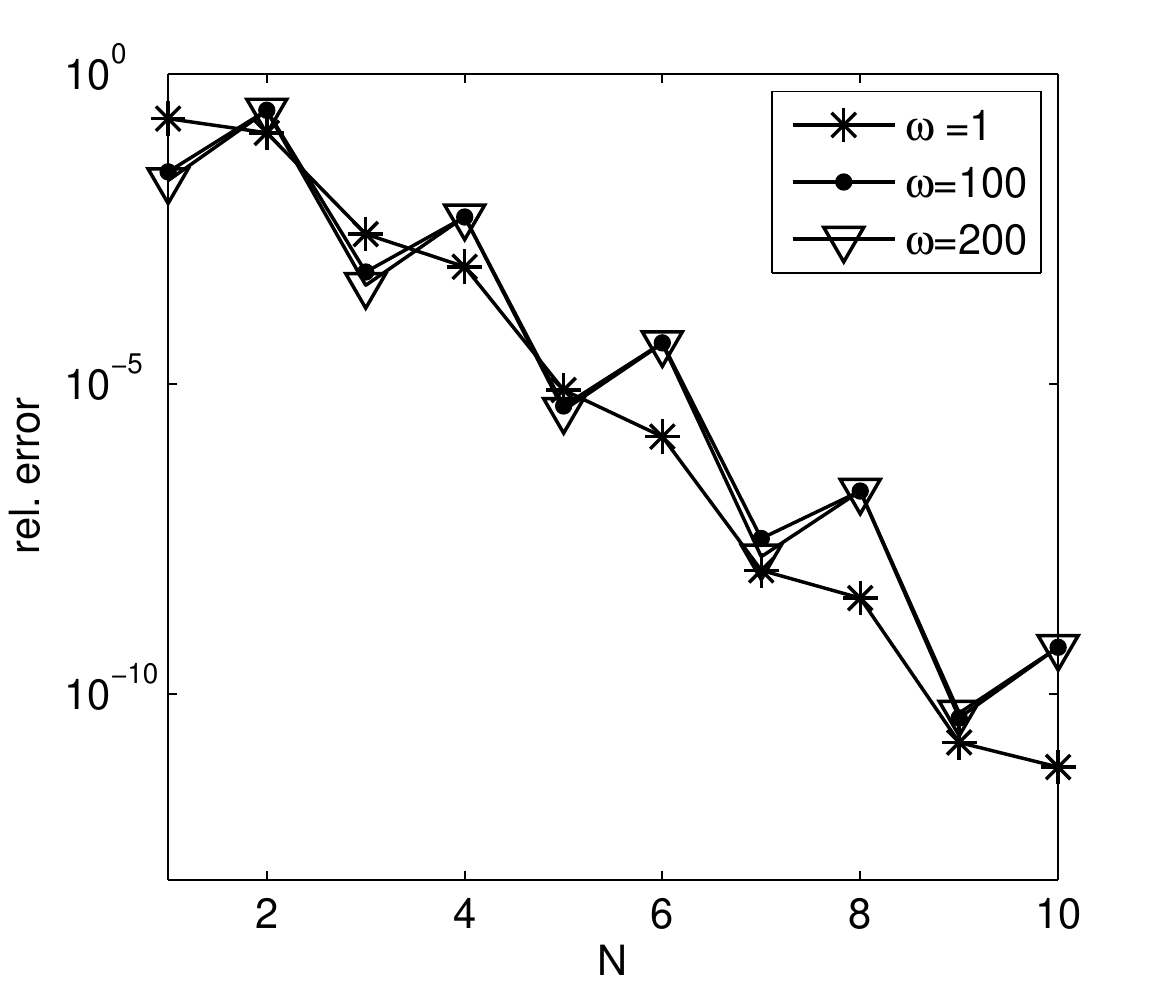}
} \hfil
\subfigure[Relative error of the approximation of  $\int_{-1}^1 \cos(x+1)\operatorname{e}^{i\omega \sin(x+1)}$]{
\centering
\includegraphics[width=0.47\textwidth]{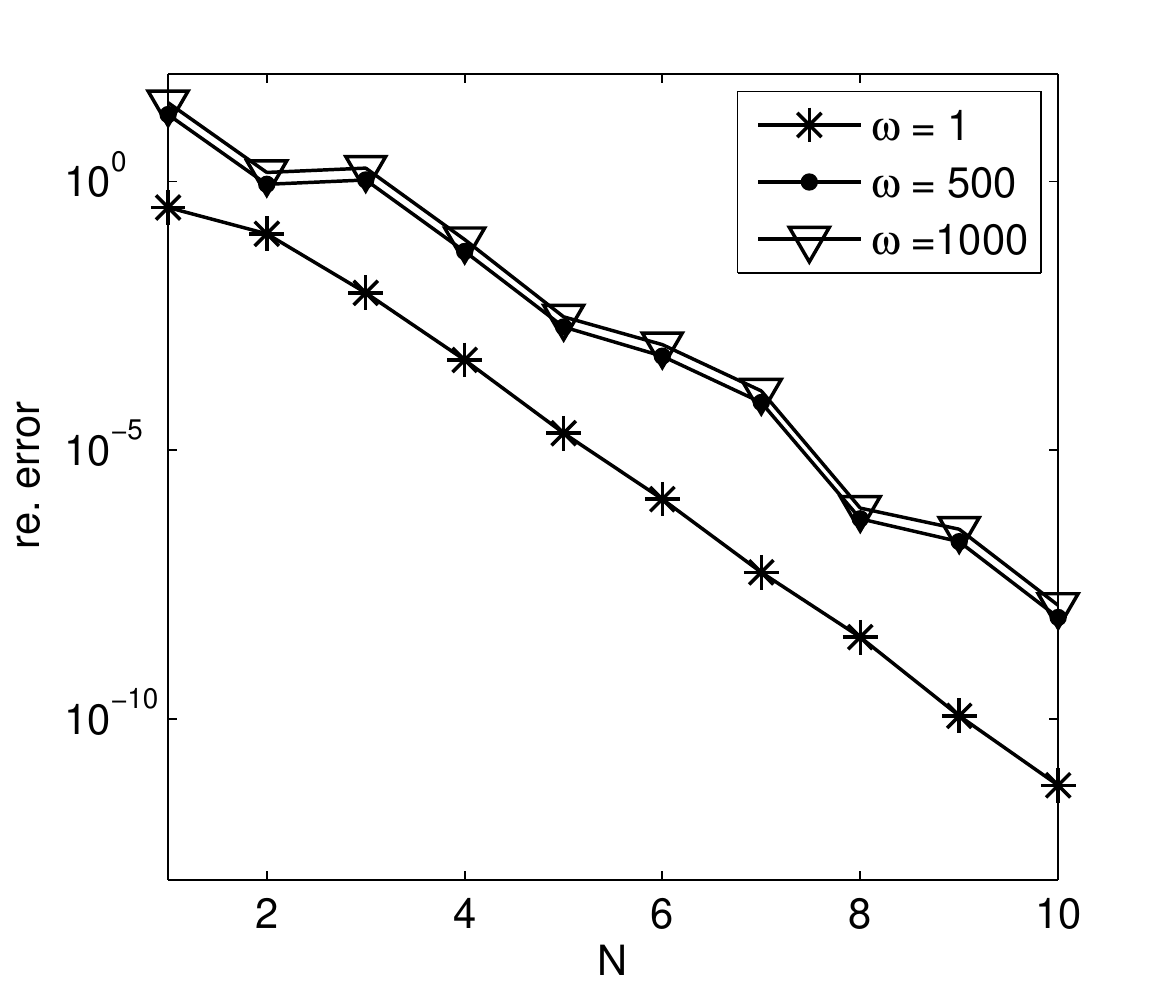}
} \caption{Quadrature error with respect to the polynomial degree 
$N$ for different integrals.}%
\label{FIG:Convergence}%
\end{figure}

Table \ref{table1} and Table \ref{table2} show the effective ranks of the QTT approximations of 
$I_{\mathscr{R}}(\omega,T_k)$ and $I_{\mathscr{I}}(\omega,T_k)$ in various situations. 
It becomes evident that the choice of $g$ has only a minor influence on the corresponding QTT ranks. 
Even non-smooth functions, functions with stationary points and functions with unbounded first 
derivative are unproblematic. Another observation is that the influence of $k$ 
(degree of the Chebyshev polynomial) on the QTT ranks is very moderate even though the 
functions $I_{\mathscr{R}}(\omega,T_k)$ and $I_{\mathscr{I}}(\omega,T_k)$ become more oscillatory 
with increasing $k$. This is supported by the theory with states rank bounds that are 
independent of $k$.\vspace{\baselineskip}

\begin{table}[h]
\centering
\begin{tabular}{cc|cccc}
$M$ & $[\omega_{\min},\omega_{\max}]$ & $\begin{matrix}g(x)=x,\\ k=2 \end{matrix}$  & 
$\begin{matrix}g(x)=x,\\ k=10 \end{matrix}$  &  
$\begin{matrix}g(x)=\frac{1}{2}x^2+\frac{1}{4}x,\\ k=2 \end{matrix}$  &  
$\begin{matrix}g(x)=\frac{1}{2}x^2+\frac{1}{4}x,\\ k=10 \end{matrix}$  \\ 
\hline
$2^{40}$ & $[0,100]$  & $4.6$  & $4.9$  & $4.5$ & $4.7$ \\ 
$2^{50}$ & $[0,100]$ & $4.2$ & $4.4$  & $4.2$ & $4.2$ \\ 
$2^{60}$ & $[0,100]$ & $3.8$  & $4.1$  & $3.8$ & $3.9$ \\ \hline
$2^{43}$ & $[0,1000]$ & $5.8$ & $6.1$ & $6.4$ & $6.6$  \\ 
$2^{53}$ & $[0,1000]$ & $5.2$  & $5.5$ & $5.8$ & $6.0$ \\ 
$2^{63}$ & $[0,1000]$ & $4.9$ & $5.0$  & $5.4$ & $5.4$ \\ \hline
$2^{43}$ & $[0,2000]$ & $6.2$ & $6.5$ & $7.2$ & $7.4$ \\ 
$2^{53}$ & $[0,2000]$ & $5.6$ & $5.9$ & $6.5$ & $6.7$ \\ 
$2^{63}$ & $[0,2000]$ & $5.2$ & $5.4$ & $6.0$ & $6.1$ \\ 
\end{tabular}
\caption{Effective QTT-ranks of $M$-vectors related to the function 
$I_{\mathscr{R}}(\omega,T_k)$ sampled on a uniform grid in the interval 
$[\omega_{\min},\omega_{\max}]$.} 
\label{table1}
\end{table}

\begin{table}[h]
\centering
\begin{tabular}{cc|ccc}
$M$ & $[\omega_{\min},\omega_{\max}]$ & $g(x)=\cos\left(x+\frac{1}{4} \right)$  &   
$g(x)=\operatorname{e}^x$  &  $g(x)=\sin(x)^2\sqrt{x+1}$  \\ 
\hline
$2^{42}$ & $[0,500]$  &  $6.2$ &  7.2 &  6.0 \\ 
$2^{52}$ & $[0,500]$ & $5.6$ &  6.5 &  5.5 \\ 
$2^{62}$ & $[0,500]$ & $5.2$  & 6.0  & 5.0  \\ \hline
$2^{42}$ & $[500,700]$ & $4.8$ & 5.1 &   4.5 \\ 
$2^{52}$ & $[500,700]$ &  $4.4$ & 4.6 &  4.0 \\ 
$2^{62}$ & $[500,700]$ & $4.1$ & 4.3  &  3.7 \\ 
\end{tabular}
\caption{Effective QTT-ranks of $M$-vectors related to the function 
$I_{\mathscr{I}}(\omega,T_5)$ sampled on a uniform grid in the interval 
$[\omega_{\min},\omega_{\max}]$.}
\label{table2}
\end{table}

If the approximation of $f$ is written in the form \eqref{LagrangeForm} then the functions 
$I_{\mathscr{R}}(\omega,L_k)$ and $I_{\mathscr{I}}(\omega,L_k)$, where $L_k$ is the $k$-th 
Lagrange basis polynomial, have to be precomputed. Table \ref{table3} shows the effective 
ranks of the QTT approximations of these functions. It can be seen that also in this case the 
ranks are low for different oscillators $g$, different $k$ and intervals $[\omega_{\min},\omega_{\max}]$. 
Using the Lagrange form of $f$ can be advantageous if  $I(\omega,f)$ has to be approximated for 
many different functions $f$ since the Chebyshev coefficients do not have to be computed. 
On the other hand the form \eqref{ChebInterp} can be favorable if the oscillator $g$ is an even or 
odd function since in this case the functions $I_{\mathscr{R}}(\omega,T_k)$ and $I_{\mathscr{I}}(\omega,T_k)$ 
are identical to zero for certain $k$ and therefore have neither to be precomputed 
nor evaluated (see Remark \ref{Rem:CrossApprox}).\vspace{\baselineskip}

\begin{table}[htb]
\centering
\begin{tabular}{cc|cccc}
$M$ & $[\omega_{\min},\omega_{\max}]$ & $\begin{matrix}g(x)=x,\\ k=1 \end{matrix}$  & 
$\begin{matrix}g(x)=x,\\ k=5 \end{matrix}$  &  
$\begin{matrix}g(x)=\frac{1}{2}x^2+\frac{1}{4}x,\\ k=1 \end{matrix}$  &  
$\begin{matrix}g(x)=\frac{1}{2}x^2+\frac{1}{4}x,\\ k=5 \end{matrix}$  \\ 
\hline
$2^{43}$ & $[0,1000]$ & $6.1$ & $5.8$ & $6.6$ & $6.4$  \\ 
$2^{53}$ & $[0,1000]$ & $5.5$  & $5.3$ & $6.0$ & $5.8$ \\ 
$2^{63}$ & $[0,1000]$ & $5.0$ & $4.9$  & $5.5$ & $5.3$ \\
\end{tabular}
\caption{Effective QTT-ranks of $M$-vectors related to the function 
$I_{\mathscr{R}}(\omega,L_k)$ sampled on a uniform grid in the interval 
$[\omega_{\min},\omega_{\max}]$.} 
\label{table3}
\end{table}

In Table \ref{table4} the computational time is illustrated that is needed to precompute  
$I_{\mathscr{R}}(\cdot ,T_k)$ via the QTT cross approximation algorithm. These timings mainly 
depend on the method that is used to compute $I_{\mathscr{R}}(\omega_0,T_k)$ at different 
points $\omega_0\in [\omega_{\min},\omega_{\max}]$ within this algorithm. As mentioned above 
we use a standard Gauss-Legendre quadrature rule to approximate these integrals. 
More precisely we divide the integration domain $[-1,1]$ into  $\omega_{\max}$ subintervals and 
use $8$ quadrature points in each subinterval. Although this strategy has a suboptimal complexity, 
it is very general, easy to implement and leads to very accurate approximations of $I_{\mathscr{R}}(\omega_0,T_k)$.  
Since the cross approximation algorithm requires the computation of $I_{\mathscr{R}}(\omega_0,T_k)$ 
only at few grid points the computing times remain very moderate for different intervals, 
grid sizes and oscillators. Recall that in order to obtain approximations of $I_{\mathscr{R}}(\omega,f)$ 
the functions $I_{\mathscr{R}}(\omega,T_k)$ have to be precomputed for $0\leq k\leq N$. 
Since these tasks are independent of each other, they can be easily performed in parallel.\vspace{\baselineskip}

\begin{table}[ht]
\centering
\begin{tabular}{c|ccc|ccc|ccc}
 $M$ & $2^{43}$ & $2^{53}$ & $2^{63}$ & $2^{43}$ & $2^{53}$ & $2^{63}$ & $2^{43}$ & $2^{53}$ & $2^{63}$ \\ 
  $[\omega_{\min},\omega_{\max}]$ & \multicolumn{3}{c|}{ $[0,500]$} & \multicolumn{3}{c|}{ $[500,700]$} & \multicolumn{3}{c}{ $[0,1000]$} \\ 
\hline
  $\begin{matrix}g(x)=x,\\ k=2 \end{matrix}$ & $12s$ & $15s$  & $17s$  & $12s$ & $17s$ & $19s$ & $20s$ & $24s$ & $25s$  \\ \hline
  $\begin{matrix}g(x)=\frac{1}{2}x^2+\frac{1}{4}x,\\ k=3 \end{matrix}$ &  $12s$ & $13s$  & $15s$  & $14s$ & $17s$ & $20s$ & $19s$ & $21s$ & $25s$  \\ 
\end{tabular}
\caption{Computing times for the precomputation of $I_{\mathscr{R}}(\omega,T_k)$ in seconds on an Intel Core i7-2600K processor.} 
\label{table4}
\end{table}

\emph{Fourier integrals}\vspace{\baselineskip}

Let $f:\R\rightarrow\Co$ be an integrable function with $\operatorname{supp}f = [a,b]$. 
We are interested in computing the Fourier transform
\begin{align}
\label{fourier}
\hat{f}(\omega) &= \int_{a}^{b}f(x)\operatorname{e}^{- \operatorname{i} x\omega} dx
\end{align}
at different random points $\omega\in [\omega_{\min},\omega_{\max}]$. We define the affine 
scaling function $\chi_{a,b}(x) := \frac{b-a}{2}x+\frac{b+a}{2}$ and obtain
\begin{align*}
\hat{f}(\omega) &= \frac{b-a}{2} \int_{-1}^{1} f\left(\chi_{a,b}(x)\right)
\operatorname{e}^{- \operatorname{i}\cdot \chi_{a,b}(x)\cdot\omega} dx \\
&= \frac{b-a}{2} \operatorname{e}^{-\operatorname{i}\cdot \frac{b+a}{2}\cdot\omega} 
\int_{-1}^{1} f\left(\chi_{a,b}(x)\right)\operatorname{e}^{- \operatorname{i} x\tilde{\omega}} dx.
\end{align*}
with  $\tilde{\omega}= \frac{b-a}{2}\omega$. The integration domain and the oscillator of the last 
integral are independent of $a$ and $b$. Thus, in order to approximate integrals of the form 
\eqref{fourier} for different intervals $[a,b]$, only the functions $I_{\mathscr{R}}(\cdot,T_k)$ 
and $I_{\mathscr{I}}(\cdot,T_k)$  with $g(x) = -x$ have to be precomputed in 
a sufficiently large interval. Note the ranks of the corresponding QTT tensors that can be observed numerically are very similar to the results in Table \ref{table3}.  \vspace{\baselineskip}


\emph{Exotic oscillators}\vspace{\baselineskip}

Now, we consider oscillators which are not of the form $h_\omega (x) = 
\operatorname{e}^{\operatorname{i}\omega g(x)}$. Levin-type methods were introduced for the case of 
the Bessel oscillator $h_\omega (x) = J_\nu(\omega x)$ in \cite{Xiang}, 
Filon-type methods were considered in \cite{IserlesLevin} for oscillators of the 
form $h_\omega (x) = v(\sin(\omega \theta(x)) )$. For most other types of oscillators such 
methods are not available so far. In Table \ref{table_exotic} we show that our method can also 
be applied in these (and other) cases in the same way as before. Low ranks of the QTT approximations of the functions
\[
I_{h_\omega,k}(\omega):= \int_{-1}^1 T_k(x) h_\omega(x) dx, \quad 1\leq k\leq N
\] 
can be observed in all tested cases.

\begin{table}[h]
\centering
\begin{tabular}{cc|cccc}
$M$ & $[\omega_{\min},\omega_{\max}]$ & $\begin{matrix}h_\omega(x)=\\ J_{11}(\omega x) \end{matrix}$  & 
$\begin{matrix}h_\omega(x)=\\  J_{8}^2(\omega x^2) \end{matrix}$  &  
$\begin{matrix}h_\omega(x)=\\ \cos(\sin(\omega x)+1 ) \end{matrix}$  &  
$\begin{matrix}h_\omega(x)=\\  \Gamma(0.5\cdot\sin(\omega x)+2 )\end{matrix}$  \\ 
\hline
$2^{40}$ & $[0,500]$  & $5.4$  & $5.7$  & $7.2$ & $7.3$ \\ 
$2^{50}$ & $[0,500]$ & $4.9$ & $5.2$  & $6.5$ & $6.5$ \\ 
$2^{60}$ & $[0,500]$ & $4.5$  & $4.7$  & $6.0$ & $5.9$ \\
\end{tabular}
\caption{Effective QTT-ranks of $M$-vectors related to the function 
$I_{h_\omega,5}(\omega)$ sampled on a uniform grid in the interval 
$[\omega_{\min},\omega_{\max}]$.} 
\label{table_exotic}
\end{table}

\subsection{Multi-dimensional integrals}

In this paragraph we consider functions of the form
\beq
\label{multiDfun}
I_{\mathscr{R}}(\omega,L_{j_1,\ldots ,j_d}) =   
\int_{[-1,1]^d} L_{j_1}(y_1)\cdots L_{j_d}(y_d)  \cos(\omega g(y)) dy
\eeq
whose precomputation is necessary when \eqref{multiApprox} is used to approximate \eqref{multiInt}. 
Table \ref{table_multi} and \ref{table_multi2} show the effective QTT ranks of the corresponding tensors for different 
oscillators $g$ in 2 and 3 dimensions. As before we can observe that the ranks are small 
in all tested situations. The computation of the integrals within the cross approximation scheme 
was performed using a tensorized version of the Gauss-Legendre quadrature described before.


\begin{table}[h]
\centering
\begin{tabular}{cc|cccc}
$M$ & $[\omega_{\min},\omega_{\max}]$ & $\begin{matrix}g(x)=\\ x_1+x_2 \end{matrix}$  &  
$\begin{matrix}g(x)=\\ \sin(x_1)/\sqrt{x_1x_2+3} \end{matrix}$   \\ 
\hline
$2^{30}$ & $[0,100]$  & $5.2$    & $4.4$ \\ 
$2^{40}$ & $[0,100]$ & $4.6$   & $3.9$  \\ 
$2^{50}$ & $[0,100]$ & $4.2$   & $3.5$  
\end{tabular}
\caption{Effective QTT-ranks of $M$-vectors related to the function 
$I_{\mathscr{R}}(\omega,L_{2,5})$ for $d=2$  sampled on a uniform grid in the interval 
$[\omega_{\min},\omega_{\max}]$.} 
\label{table_multi}
\end{table}

\begin{table}[h]
\centering
\begin{tabular}{cc|cccc}
$M$ & $[\omega_{\min},\omega_{\max}]$ & 
$\begin{matrix}g(x)=\\ x_1+x_2+x_3 \end{matrix}$    &  
$\begin{matrix}g(x)=\\ \sin(x_1x_3)/\sqrt{x_1x_2+3} \end{matrix}$  \\ 
\hline
$2^{30}$ & $[0,50]$    & $6.1$   & $4.3$ \\ 
$2^{40}$ & $[0,50]$ &    $5.5$   & $3.9$ \\ 
$2^{50}$ & $[0,50]$   & $5.3$   & $3.7$ 
\end{tabular}
\caption{Effective QTT-ranks of $M$-vectors related to the function 
 $I_{\mathscr{R}}(\omega,L_{2,5,3})$ for $d=3$ sampled on a uniform grid in the interval 
$[\omega_{\min},\omega_{\max}]$.} 
\label{table_multi2}
\end{table}

\section{Conclusion}

We described a new approach for the efficient approximation of 
highly oscillatory weighted integrals.
The main idea of our approach is to compute a priory and then represent
in low-parametric tensor formats   certain $\omega$-dependent 
prototype functions (which by itself consist of oscillatory integrals), 
whose evaluation lead in a straightforward way to approximations of
the target integral. 
The QTT approximation method for long functional $m$-vectors
allows the accurate approximation and efficient $\log m$-storage of these functions in 
the wide range of grid and frequency parameters.
Numerical examples illustrate the efficiency of the QTT-based numerical integration
scheme on many nontrivial examples in one and several spatial dimensions.
This demonstrates the promising features of the method for further applications to 
the general class of highly oscillating integrals and for the solution of ODEs and
PDEs with oscillating or/and quasi-periodic coefficients arising in computational 
physics and chemistry as well as in homogenization techniques.

\vspace{\baselineskip}

\textbf{Acknowledgement.} A large  part of this research was conducted during 
a stay of the second author at 
the Max Planck Institute for Mathematics in the Sciences in Leipzig. 
The financial support is greatly acknowledged.

\bibliographystyle{abbrv}
\bibliography{mybib}

\end{document}